\newtheorem{theorem}{Theorem}
\newtheorem{proposition}{Proposition}
\newtheorem{remark}{Remark}
\newtheorem{corollary}{Corollary}
\newcommand{\matc}{\ensuremath{\mathcal{C}}}
\newcommand{\mreals}{\ensuremath{\mathbb{R}}}
\newcommand{\FF}{\ensuremath{\mathbb{F}}}
	\newcommand{\eqref}[1]{~(\ref{#1})}
	\def\mod{\mathop{\rm mod}}
\def\EE{\mathbb{E}\,}
\def\eqdef{\stackrel{\triangle}{=}}
\def\simleq{{{\mskip 3mu plus 2mu minus 1mu%
	\setbox0=\hbox{$\mathchar"013C$}%
	\raise.2ex\copy0\kern-\wd0%
	\lower0.9ex\hbox{$\mathchar"0218$}}\mskip 3mu plus 2mu minus 1mu}}
\def\simleq{\lesssim}
\def\simgeq{{{\mskip 3mu plus 2mu minus 1mu%
	\setbox0=\hbox{$\mathchar"013E$}%
	\raise.2ex\copy0\kern-\wd0%
	\lower0.9ex\hbox{$\mathchar"0218$}}\mskip 3mu plus 2mu minus 1mu}}
\def\simgeq{\gtrsim}
\begin{document}

\sloppy

%% Paper Title
%% You can use linebreaks \\ within to get better formatting as
%% desired. 
\title{Bounds for codes on pentagon and other cycles} 

\author{Marco~Dalai and Yury Polyanskiy % <-this % stops a space
}
\thanks{M. D. is with the Department of Information Engineering, University of Brescia, Italy, e-mail:
\texttt{marco.dalai@unibs.it}. Y.P. is with the Department of EECS, MIT, Cambridge, MA, email: \texttt{yp@mit.edu}. 
This material is based upon work supported by the National Science
    Foundation under Grant No CCF-13-18620. The work was mainly completed while the authors were visiting the Simons Institute for the
    Theory of Computing at UC Berkeley, whose support is gratefully acknowledged.
}%

\begin{abstract}
The capacity of a graph is defined as the rate of exponential grow of independent sets in the strong powers of the
graph. In strong power, an edge connects two sequences if at each position letters are equal or adjacent. We consider a
variation of the problem where edges in the power graphs are removed among sequences which differ in more than a
fraction $\delta$ of coordinates.  For odd cycles, we derive an upper bound on the corresponding rate which
combines Lov\'asz' bound on the capacity with Delsarte's linear programming bounds on the minimum distance of codes in
Hamming spaces.  For the pentagon, this shows that for $\delta \ge {1-{1\over\sqrt{5}}}$ the Lov\'asz rate is the best
possible,  while we prove by a Gilbert-Varshamov-type bound that a higher rate is achievable for $\delta < {2\over 5}$.

Communication interpretation of this question is the problem of sending quinary symbols
subject to $\pm 1\mod 5$ disturbance. The maximal communication rate subject to the zero undetected-error equals
capacity of a pentagon. The question addressed here is how much this rate can be increased if only a fraction $\delta$
of symbols is allowed to be disturbed.  
\end{abstract}

\maketitle

\section{Introduction}

The problem we consider in this paper is the following. Given a graph $G$ we define a semimetric on the vertex set $V(G)$ 
$$ d(v,v') = \begin{cases} 0, & v=v',\\
			1, &v\sim v',\\
			\infty, & \mbox{otherwise} \end{cases}\,.$$
We extend this semimetric additively to the Cartesian products $V(G)^n$ and define a graph $G(n,d)$ as follows
$$ V(G(n,d)) = V(G)^n, E(G(n,d))=\left\{(x,x'): d(x,x') \eqdef \sum_{j=1}^n d(x_j, x_j') \le d \right\}\,.$$
The goal is to determine (bounds on)
$$ R^*(G, \delta) \eqdef \limsup_{n\to\infty} {1\over n } \log \alpha(G(n,\delta n))\,.$$

Note that $G(n,d)$ corresponds to the graph obtained by removing in the strong power graph $G^n$ edges between sequences which differ in more than $d$ positions. On one hand, this problem is a specialization of the general one considered in \cite{dalai-TIT-2015} (see Section V.D therein). On
the other hand, it is a natural generalization of the two classically studied ones:
\begin{enumerate}
\item Shannon capacity of a graph~\cite{shannon-1956}, which corresponds to $\delta=1$. The best general upper bound is~\cite{lovasz-1979}
	\begin{equation}\label{eq:lovasz}
	 R^*(G, 1) \le \log \theta_L(G)\,,
\end{equation}	
	where $\theta_L$ is the Lovasz's $\theta$-function (see below).
\item Rate-Distance tradeoff in Hamming spaces corresponds to $G=K_q$ (the clique). Here the two bounds we mention are 
\begin{equation}\label{eq:hamming}
	R_{GV}(q,\delta) \le R^*(K_q,\delta) \le R_{LP1}(q,\delta)\,,
\end{equation}
where for $\delta < 1-{1\over q}$
\begin{align} R_{GV}(q,\delta) &\eqdef \log q - H_q(\delta)\\
   R_{LP1}(q,\delta) &\eqdef H_q\left({(q-1)-(q-2)\delta - 2\sqrt{(q-1)\delta(1-\delta)}\over q}\right)\,,\\
   H_q(x) &\eqdef x \log(q-1) - x \log x - (1-x)\log(1-x)\,,
\end{align}   
and for $\delta\ge 1-{1\over q}$ both $R_{GV}$ and $R_{LP1}$ equal zero.
(Better bounds also exist: an improved upper bound for for small $\delta$'s was found by Aaltonen~\cite{aaltonen1990new}, and an
improved lower bound for large $q$'s and some range of $\delta$'s is shown via algebraic-geometric codes~\cite{tsfasman1982modular}.)
\end{enumerate}

The main contribution of this work is an upper bound on $R^*(G,\delta)$ for the case of $G=C_q$ (a $q$-cycle,
$q\ge3$--odd). Namely,
we prove:
\begin{equation}\label{eq:main}
	R^*(C_q, \delta) \le \log \theta_L(C_q) + R_{LP1}(q', \delta)\,, \qquad q' = 1+{1\over \cos{\pi/q}} \le q\,. 
\end{equation}

Note that, for $q>3$, $q'$ is not an integer, and $R_{LP1}(q',\delta)$ can no longer be thought of as a rate upper bound for Hamming
space.  Also note that $\theta_L(C_q) = {q\over q'}$ and so (informally speaking) the upper bound decomposes $C_q^n$
into disjoint $\left({q\over q'}\right)^n$ copies of imaginary Hamming spaces with fractional $q'$, inside which one can
additionally pack exponentially many points at minimum distance $\delta$.
As in the ordinary linear programming bound, the derivation of the bound goes through the use of  Krawtchouk polynomials; to the best of our knowledge, this is the first appearance of these polynomials with a non-integer coefficient in
coding theory or combinatorics.

We also give some lower bounds; in particular, for the pentagon we prove 
\begin{equation}\label{eq:main_penta}
	{1\over 2} \log 5 + {1\over 2} R_{GV}(5, 2\delta) \le R^*(C_5, \delta) \le {1\over 2} \log 5 + R_{LP1}(\sqrt{5}, \delta)\,.
\end{equation}

\section{Minimum Distance of Codes on Cycles}

In the rest of the paper, we will focus on cycles. Let for ease of notation  $C_q = \mathbb{Z}/q\mathbb{Z}$, be a ring and a graph  (the $q$-cycle) simultaneously.
A code $\matc$ is a subset of $C_q^n$ and its minimum distance is defined as usual
\begin{equation}
d_{\min}(\matc)=\min_{c\neq c' \in\matc} d(c,c').
\end{equation}
Let $M_q(n,d)$ be the size of the largest code of length $n$ with minimum distance at least $d$, i.e. set $M_q(n,d)=\alpha(C_q(n,d-1))$.

\subsection{Even Cycles}
For even cycles, the problem is equivalent to the ordinary binary case with Hamming distance.
\begin{proposition}
\label{prop:evenq}
For even $q$, we have
\begin{equation}
M_q(n,d)=(q/2)^n M_{2}(n,d).
\end{equation}
and hence
\begin{equation}
R^*(C_q,\delta)=\log(q/2)+R^*(K_2,\delta).
\end{equation}
\end{proposition}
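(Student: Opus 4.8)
The plan is to prove the two displayed identities separately; the second (the rate statement) follows from the first by applying $\frac1n\log(\cdot)$ and letting $n\to\infty$, so the real content is the finite identity $M_q(n,d)=(q/2)^n M_2(n,d)$. Write $m=q/2$ and let $\psi\colon C_q^n\to\{0,1\}^n$ be the coordinatewise parity map $\psi(x)_j=x_j\bmod 2$. The one structural fact I would isolate first is that, because $q$ is even, $C_q$ is bipartite with colour classes the even and odd residues; in particular two vertices of the same parity are never adjacent. Hence if $x,x'$ are \emph{comparable}, meaning $d(x,x')<\infty$ (equivalently, at every coordinate $x_j=x_j'$ or $x_j\sim x_j'$), then every coordinate at which they differ carries a parity flip and contributes exactly $1$, so that $d(x,x')=\dH(\psi(x),\psi(x'))$. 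This identity drives both inequalities.

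For the lower bound $M_q(n,d)\ge m^n M_2(n,d)$ I would take an optimal binary code $\matb\subseteq\{0,1\}^n$ of minimum Hamming distance $\ge d$ with $|\matb|=M_2(n,d)$, and lift it to the union of full parity fibres $\matc=\psi^{-1}(\matb)=\bigcup_{p\in\matb}\psi^{-1}(p)$, of size $m^n|\matb|$. To verify $d_{\min}(\matc)\ge d$, take distinct $c,c'\in\matc$: if they lie in one fibre they have equal parity but differ somewhere, hence differ at a same-parity coordinate and are at distance $\infty$; if they lie in fibres $p\ne p'$ then either they are incomparable (distance $\infty$) or, by the identity above, $d(c,c')=\dH(p,p')\ge d$. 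In every case $d(c,c')\ge d$.

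The harder direction is $M_q(n,d)\le m^n M_2(n,d)$, and I expect it to be the main obstacle. Projecting by $\psi$, each fibre contributes at most $|\psi^{-1}(p)|=m^n$ codewords, but the set $P=\psi(\matc)$ of parities actually used need \emph{not} have minimum distance $\ge d$: two parities $p,p'$ with $\dH(p,p')<d$ may coexist, provided (by the identity) that \emph{every} cross pair between $\psi^{-1}(p)$ and $\psi^{-1}(p')$ is incomparable. The clean engine I would build to control this is the following slice computation, taking $d\ge2$ (the case $d\le1$ is trivial). Suppose $p,p'$ differ in a single coordinate. Project both fibres onto the remaining $n-1$ (equal-parity) coordinates; for each of the $m^{n-1}$ projection values, the symbols used in the free coordinate by $\matc\cap\psi^{-1}(p)$ (a set of even residues) and by $\matc\cap\psi^{-1}(p')$ (a set of odd residues) can have no adjacent cross pair, for such a pair would be comparable and at distance $1<d$. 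Since evens are mutually non-adjacent and odds are mutually non-adjacent, these symbols form an independent set in $C_q$, of size at most $\alpha(C_q)=m$; summing over projection values bounds the \emph{joint} usage of the two close fibres by $m^{n-1}\cdot m=m^n$.

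To finish I would upgrade this pairwise statement to the global bound, showing that the incomparability constraint is exactly strong enough that close parities can never be exploited, so that some maximum code may be taken of the lifted form $\psi^{-1}(\matb)$ for a genuine distance-$d$ binary code $\matb$, whence $|\matc|\le m^n M_2(n,d)$. Equivalently, $C_q(n,d-1)$ is a spanning subgraph of the lexicographic product $K_2(n,d-1)[\overline{K}_{m^n}]$, whose independence number is precisely $m^n M_2(n,d)$; the task is to prove that the edges deleted in passing to $C_q(n,d-1)$---exactly the long-range, same-parity incomparabilities created by the cycle---never raise the independence number. The delicate point, and where the real work lies, is that ``being at Hamming distance $<d$'' is not transitive, so the slice bound must be assembled by an exchange/compression argument (completing partially used fibres and redistributing the shared $\alpha(C_q)=m$ budget) rather than by a single partition into distance-$d$ clusters; the two distance computations above are easy by comparison.
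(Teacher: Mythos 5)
Your lower bound is correct and is essentially the paper's own construction: the paper writes the lift of an optimal binary code $\matc'$ as $\{0,2,\ldots,q-2\}^n+\matc'$, which is exactly your union of parity fibres $\psi^{-1}(\matc')$, and the distance verification is the same. The upper bound, however, has a genuine gap, and you have put your finger on it yourself: decomposing $C_q^n$ into the $2^n$ parity fibres leaves you needing to show that incomparable pairs across Hamming-close fibres can never be exploited globally, and the ``exchange/compression argument'' you invoke for this is never supplied (your pairwise slice bound for two fibres differing in one coordinate is correct but does not assemble into the global claim, precisely because of the non-transitivity you note). Your lexicographic-product remark also points the wrong way: $C_q(n,d-1)$ being a spanning subgraph of $K_2(n,d-1)[\overline{K}_{m^n}]$ only yields $\alpha(C_q(n,d-1))\ge m^n M_2(n,d)$, i.e.\ the lower bound again, since deleting edges can only increase the independence number.

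The idea you are missing is to partition by the dual map. For $v\in\{0,1,\ldots,q/2-1\}^n$ let $\mathcal{E}_v=\prod_{i=1}^n\{2v_i,2v_i+1\}$. These $(q/2)^n$ boxes partition $C_q^n$, and---unlike your parity fibres---any two points of the same box agree or are adjacent at \emph{every} coordinate, so they are always comparable and their distance is finite and equal to the Hamming distance of their parity vectors. Hence the codewords of $\matc$ lying in a fixed $\mathcal{E}_v$ map injectively and distance-preservingly onto a binary code of minimum distance at least $d$, so each box contains at most $M_2(n,d)$ codewords and $M_q(n,d)\le (q/2)^n M_2(n,d)$ follows by counting over the $(q/2)^n$ boxes. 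This one-line pigeonhole replaces the entire global assembly you were attempting: the incomparability phenomenon you were fighting simply never arises inside a box, because boxes are cliques of the comparability relation rather than independent sets of it.
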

\begin{proof}
For any $v\in\mathbb{Z}_{q/2}^n$, consider the sequence of edges indexed by $v$
\begin{equation}
\mathcal{E}_{v}=\bigtimes_{i=1}^n \{2v_i,2v_i+1\}.
\end{equation}
There are $(q/2)^n$ such sets, and each codeword is contained in one of them. Hence, for a given $q$-ary code $\matc$ which achieves $M_q(n,d)$, one of those sets contains at least $\lceil M_q(n,d)(q/2)^{-n}\rceil$ codewords. Since these codewords are all at finite distance with one another, they must differ in at least $d$ positions, and they can be mapped to binary codewords preserving pairwise distances. Hence, $\lceil M_q(n,d)(q/2)^{-n}\rceil\leq M_2(n,d)$, implying $M_q(n,d)\leq (q/2)^n M_{2}(n,d)$.

Conversely, let $\matc'\subseteq \{0,1\}^n$ be any binary code achieving $M_2(n,d)$.
Let $\matc=\{0,2,\ldots,q-2\}^n$ and define the code
\begin{equation}
\matc^+ = \matc +\matc'
\end{equation}
Consider two distinct codewords $x^+=x+x'$ and $y^+=y+y'$ in $\matc^+$. If $x'=y'$ then $x\neq y$ and $d(x^+,y^+)=\infty$. Otherwise, $x^+-y^+$ contains at least $d$ odd components and thus $d(x^+,y^+)\geq d$. Since $|\matc|=(q/2)^n M_{2}(n,d)$, we deduce that $M_q(n,d)\geq (q/2)^n M_{2}(n,d)$. Note that the construction of $\matc^+$ is a particular case of the general one discussed in Section \ref{sec:achievability} below.
\end{proof}

\subsection{Odd cycles}
Proposition \ref{prop:evenq} relies on the fact that, for even $n$, the clique covering number of $C_q^n$ equals its independence number. For odd cycles, a straightforward attempt along the same line only gives a much weaker result.
\begin{proposition}
\label{prop:oddqweak}
For odd $q$, we have
\begin{equation}
\log((q-1)/2)+R^*(K_2,\delta)\leq R^*(C_q,\delta)\leq\log(q/2)+R^*(K_2,\delta).
\end{equation}
\end{proposition}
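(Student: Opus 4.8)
The plan is to establish both inequalities by elementary combinatorial constructions, mirroring the proof of Proposition~\ref{prop:evenq} but accounting for the fact that an odd cycle admits no perfect matching on its vertices. The lower bound comes from an explicit packing, and the upper bound from a \emph{fractional} clique cover that exploits all $q$ edges symmetrically.

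For the lower bound I would reuse the additive construction. Since $q$ is odd, the $(q-1)/2$ disjoint edges $\{0,1\},\{2,3\},\dots,\{q-3,q-2\}$ cover every vertex except $q-1$. Take the base alphabet $\mathcal{B}=\{0,2,\dots,q-3\}$, so $|\mathcal{B}|=(q-1)/2$, let $\matc'\subseteq\{0,1\}^n$ achieve $M_2(n,d)$, and set $\matc^+=\mathcal{B}^n+\matc'$ with coordinatewise addition in $\mathbb{Z}/q\mathbb{Z}$. Because every coordinate of a word in $\matc^+$ lies in $\{0,\dots,q-2\}$ there is no wrap-around, and a parity argument (the parity of $x_i+x_i'$ records $x_i'$, hence $x_i$) shows the map $(x,x')\mapsto x+x'$ is injective, giving $|\matc^+|=((q-1)/2)^n M_2(n,d)$. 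For the minimum distance I would argue exactly as in Proposition~\ref{prop:evenq}: if the binary parts agree, two distinct words differ in some coordinate by a nonzero \emph{even} amount, forcing distance $\infty$; if the binary parts differ on a set $S$ of $\ge d$ positions, then on $S$ the coordinate differences are odd, hence nonzero, so each such coordinate contributes at least $1$ whenever the total distance is finite. Either way $d_{\min}(\matc^+)\ge d$, which yields $M_q(n,d)\ge((q-1)/2)^n M_2(n,d)$ and the claimed lower bound on $R^*$.

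For the upper bound the point is to realise the \emph{fractional} clique-cover number $q/2$ of $C_q$ rather than the integer clique-cover number $(q+1)/2$. I would use all $q$ edges $E_i=\{i,i+1\}$ (indices mod $q$) at once. For $v\in\{0,\dots,q-1\}^n$ put $\mathcal{E}_v=\bigtimes_{i=1}^n\{v_i,v_i+1\}$; a codeword $c$ lies in $\mathcal{E}_v$ iff $v_i\in\{c_i-1,c_i\}$ for every $i$, i.e.\ in exactly $2^n$ of the $q^n$ product sets. Double counting gives $\sum_v|\matc\cap\mathcal{E}_v|=2^n|\matc|$, so some $\mathcal{E}_v$ contains at least $(2/q)^n|\matc|$ codewords. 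Inside a fixed $\mathcal{E}_v$ each coordinate ranges over the two-element clique $\{v_i,v_i+1\}$, so distinct codewords differ only by unit steps and their distance is exactly Hamming; collapsing each clique to $\{0,1\}$ produces a binary code of minimum distance $\ge d$, whence at most $M_2(n,d)$ codewords. Combining, $(2/q)^n|\matc|\le M_2(n,d)$, i.e.\ $M_q(n,d)\le(q/2)^n M_2(n,d)$.

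I expect the only genuinely delicate point to be securing the constant $q/2$ in the upper bound: a naive clique cover of the odd cycle uses $(q+1)/2$ cliques and would give only the weaker $\log((q+1)/2)$. The double counting over all $q$ edges, in which each codeword is charged to $2^n$ cells out of $q^n$, is precisely what converts the integral cover into its fractional value $q/2$; everything else is bookkeeping already carried out for even cycles. Finally, taking $d\approx\delta n$, dividing by $n$, and passing to $\limsup$ in both inequalities delivers the stated bounds on $R^*(C_q,\delta)$.
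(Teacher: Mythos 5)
Your proof is correct. The lower bound is essentially the paper's: the paper simply notes that any code for $C_{q-1}^n$ is a code for $C_q^n$ and invokes Proposition~\ref{prop:evenq}, which is exactly your explicit construction $\mathcal{B}^n+\matc'$ over the sub-alphabet $\{0,\dots,q-2\}$ written out in full; your parity and wrap-around checks are the right ones (an even nonzero difference bounded by $q-3$ in absolute value cannot be $\pm1 \bmod q$, so it forces distance $\infty$). Your upper bound, however, takes a genuinely different and in one respect stronger route. The paper puts at least $\lceil M_q(n,d)/\bar{\chi}(C_q^n)\rceil$ codewords into one clique of an optimal integer clique cover of $C_q^n$ and then appeals to the asymptotic fact $\bar{\chi}(C_q^n)^{1/n}\to\bar{\chi}^*(C_q)=q/2$; this yields only the limiting statement and leans on a known but non-trivial result about clique covers of strong powers. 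Your double count over all $q^n$ translated edge-products $\mathcal{E}_v=\{v_1,v_1+1\}\times\cdots\times\{v_n,v_n+1\}$, with each codeword lying in exactly $2^n$ of them, implements the fractional cover directly and delivers the clean finite-$n$ inequality $M_q(n,d)\le (q/2)^n M_2(n,d)$, from which the asymptotic bound follows at once. The distance bookkeeping inside a fixed $\mathcal{E}_v$ (all pairwise distances finite and equal to Hamming) is also handled correctly.
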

\begin{proof}
The first inequality is trivial, since any $\delta$-code for $C_{q-1}^n$ is a $\delta$-code for $C_{q}^n$. To prove the upper bound, 
let $\bar{\chi}(C_q^n)$ be the clique covering number of $C_q^n$. Any code achieving $M_q(n,d)$ has at least $\lceil M_q(n,d)/\bar{\chi}(C_q^n)\rceil $ codewords in the same clique and these codewords all lie in the same edge in each position. So, they can be mapped to a binary code and thus $\lceil M_q(n,d)/\bar{\chi}(C_q^n)\rceil \leq M_2(n,d)$. As $n\to \infty$,  $\bar{\chi}(C_q^n)^{1/n}$ tends to $\bar{\chi}^*(C_q)=q/2$, the fractional clique covering number of the cycle. Taking logarithms and dividing by $n$ we deduce $R_q(\delta)\leq\log(q/2)+R_{2}(\delta)$.
\end{proof}
\begin{remark}
Proposition \ref{prop:oddqweak} implies that, for large $q$, 
\begin{equation}
R^*(C_q,\delta)=\log(q/2)+R^*(K_2,\delta)+\mathcal{O}(1/q)\,.
\end{equation} 
\end{remark}

Proposition \ref{prop:oddqweak} gives poor results for large values of $\delta$. In particular, the upper bound 
becomes weaker than Lov\'asz' result
\begin{equation}\label{eq:lovasz_cq}
	R^*(C_q,1)\leq \log \theta_L(C_q), \qquad \theta_L(C_q) = \frac{q \cos(\pi/q)}{1+\cos(\pi/q)}.
\end{equation}
The first question we address is the following: Is it possible to show $R^*(C_q,\delta) \le \log \theta_L(C_q)$ for
some $\delta<1$? That is, can we give a tighter bound on the ``Plotkin'' point?
The answer is yes and we can do so without the (heavier) machinery needed for~\eqref{eq:main}.

Our upper bounds require computing $\theta$-functions. We define three variations of those as
follows (see \cite{schrijver-1979}, \cite{lovasz-1979} and \cite{szegedy-1994})
\begin{align} \theta_S(G) &\eqdef  \min\{\max_{v\in V(G)} D_{v,v}: D\succeq J, D|_{E(\bar G)}\le 0\}\label{eq:thetas_alt}\\
	      \theta_L(G) &\eqdef  \min\{\max_{v\in V(G)} D_{v,v}: D\succeq J, D|_{E(\bar G)}= 0\}\label{eq:thetal_alt}\\
		\theta_z(G) &\eqdef \min\{\max_{v\in V(G)} D_{v,v}: D\succeq J, D|_{E(\bar G)}= 0, D_{v,v'} \ge 0\}\,,
					\label{eq:thetaz_alt}
\end{align}
where $D$ is $|G|\times|G|$ matrix and $J$ is a matrix of all ones.
These stand for the Schrijver, Lov\'asz and Szegedy's $\theta$-functions respectively. 

%\begin{equation}\label{eq:simp_1}
%	R^*(G, \delta) \le \log \theta_z(G) + R_{LP1}(|G|, \delta)\,.
%\end{equation}

\begin{proposition}\label{prop:simple} We have
	\begin{equation}\label{eq:prop_simple}
		\theta_S(G(n,d)) \le \theta_z(G)^n \theta_S(K_{|G|}, d)\,.
\end{equation}	
\end{proposition}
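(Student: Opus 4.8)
The plan is to exhibit a single matrix that is feasible for the Schrijver program defining $\theta_S(G(n,d))$ and whose diagonal is controlled by the right-hand side. Write $q=|G|$, identify $V(G)$ with $\{1,\dots,q\}$ and $V(G(n,d))$ with $[q]^n$, and recall that a pair $(x,x')$ is a non-edge of $G(n,d)$ exactly when $d(x,x')>d$. Let $A$ be an optimal matrix for $\theta_z(G)$, so $A\succeq J_q$, $A_{u,v}=0$ on $E(\bar G)$, $A_{u,v}\ge 0$ for all $u,v$, and $\max_u A_{u,u}=\theta_z(G)$; the nonnegativity constraint built into Szegedy's function (and absent from $\theta_L$) will be the decisive ingredient. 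Let $B$ be an optimal matrix for $\theta_S(K_q,d):=\theta_S(K_{|G|},d)$, i.e.\ the Schrijver program of the $q$-ary Hamming graph $K_q(n,d)$ whose edges join words at Hamming distance at most $d$ (note $K_q(n,d)$ is just $G(n,d)$ for $G=K_q$); thus $B\succeq J_{q^n}$, $B_{x,x'}\le 0$ whenever $\dH(x,x')>d$, and $\max_x B_{x,x}=\theta_S(K_q,d)$.

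First I would form the Kronecker power $A^{\otimes n}$, with entries $(A^{\otimes n})_{x,x'}=\prod_{j=1}^n A_{x_j,x'_j}$, and record three facts. (i) It is entrywise nonnegative. (ii) Its $(x,x')$ entry vanishes as soon as some coordinate $j$ has $x_j\ne x'_j$ and $x_j\not\sim x'_j$ (an ``$\infty$'' coordinate), since that factor is an off-diagonal $E(\bar G)$ entry of $A$, hence $0$. (iii) $A^{\otimes n}\succeq J_{q^n}$. The last point is the first place care is needed: writing $A=J_q+P$ with $P\succeq 0$ and using that $J_q^{\otimes n}=J_{q^n}$ together with the fact that a Kronecker product of PSD matrices is PSD, a one-line induction gives $A^{\otimes n}=A\otimes A^{\otimes(n-1)}=J_{q^n}+(\text{PSD terms})$, so $A^{\otimes n}-J_{q^n}\succeq 0$.

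Next I would take the Schur (entrywise) product $D=A^{\otimes n}\circ B$ as the candidate matrix for $\theta_S(G(n,d))$. The semidefinite constraint $D\succeq J_{q^n}$ follows from $A^{\otimes n}\succeq J_{q^n}$ and $B\succeq J_{q^n}$: writing $A^{\otimes n}=J_{q^n}+P_1$ and $B=J_{q^n}+P_2$ with $P_1,P_2\succeq 0$, and using $J\circ M=M$ together with the Schur product theorem, one gets $D-J_{q^n}=P_1+P_2+P_1\circ P_2\succeq 0$. For the sign constraint I would check every non-edge $(x,x')$ of $G(n,d)$, i.e.\ every pair with $d(x,x')>d$, in two cases: if some coordinate is an ``$\infty$'' coordinate then $(A^{\otimes n})_{x,x'}=0$ and hence $D_{x,x'}=0$; otherwise every coordinate is finite, so $(A^{\otimes n})_{x,x'}\ge 0$, while $d(x,x')$ now equals the number of differing coordinates, so $d(x,x')>d$ means $\dH(x,x')>d$, giving $B_{x,x'}\le 0$ and thus $D_{x,x'}\le 0$. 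Finally the diagonal obeys $D_{x,x}=\bigl(\prod_j A_{x_j,x_j}\bigr)B_{x,x}\le \theta_z(G)^n\,\theta_S(K_q,d)$, and maximizing over $x$ yields the claim.

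The only genuinely delicate points are the two positivity arguments, namely that Kronecker powers and Schur products each preserve domination by the all-ones matrix $J$, and the observation that the nonnegativity constraint distinguishing $\theta_z$ from $\theta_L$ is precisely what closes the sign check in the ``all coordinates finite but Hamming distance too large'' case. Everything else is bookkeeping about how the condition $d(x,x')>d$ splits into the ``some infinite coordinate'' and ``too many finite differences'' alternatives.
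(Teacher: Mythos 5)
Your proof is correct and follows essentially the same route as the paper: a Schur product of a tensor power of a Szegedy-optimal matrix with a Schrijver-optimal matrix for the Hamming graph, with $D\succeq J$ obtained from $(A^{\otimes n}-J)\circ(B-J)\succeq 0$ and the sign constraint split into the ``infinite coordinate'' and ``Hamming distance too large'' cases. The paper packages the same argument as a general lemma $\theta_S(G\cap H)\le\theta_z(G)\,\theta_S(H)$ and calls the sign check ``trivial''; your version is merely more explicit about why the nonnegativity constraint of $\theta_z$ is needed there.
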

\begin{proof} We prove a more general result: Let $G$ and $H$ be graphs with the same vertex sets
$V(G)=V(H)$, then 
\begin{equation}\label{eq:ths_intersection}
	\theta_S(G\cap H) \le \theta_z(G) \theta_S(H)\,,
\end{equation}
where $G\cap H$ denotes a graph obtained by intersecting edge-sets $E(G)\cap E(H)$. Let $D_1$ and $D_2$ be solutions
in~\eqref{eq:thetaz_alt} and~\eqref{eq:thetas_alt} for the graphs $G$ and $H$, respectively. Then, 
$$ D_{v,v'} \eqdef (D_1)_{v,v'} (D_2)_{v,v'}\,, $$
i.e. $D=D_1 \odot D_2$, 
is a feasible choice for $\theta_S(G\cap H)$. Indeed, by property of Schur product of matrices
$$ (D_1 - J)\odot(D_2-J) \succeq 0\,,$$
and expanding the left-hand side we get $D\succeq J$ as required. The condition $D_{E(\overline{G\cap H})}$ is satisfied
trivially.

To complete the proof of the proposition one only needs to notice that by taking tensor-power of a solution $D$ for
$\theta_z(G)$ we get $\theta_z(G^n)\le \theta_z(G)^n$.

Finally, to derive~\eqref{eq:prop_simple} from~\eqref{eq:ths_intersection} we note that 
$$G(n,d) =  G^n \cap K_{|G|}(n, \delta n)\,,$$
where $G^n$ denotes the strong-product power. 
\end{proof}
Notice that for cycles
$$ \theta_z(C_q)=\theta_L(C_q)\,.$$
Therefore, Proposition \ref{prop:simple} implies the following.
\begin{corollary}\label{cor:sz+hamming}
$$ R^*(C_q, \delta n) \le \log \theta_L(C_q) + R_{LP1}(q, \delta)\,.$$
\end{corollary}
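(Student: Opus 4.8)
The plan is to chain together three ingredients: the Schrijver bound on the independence number, Proposition~\ref{prop:simple}, and the asymptotics of the Hamming linear-programming bound. First I would recall that for any graph $H$ the smallest of the three $\theta$-functions still dominates the independence number, $\alpha(H)\le\theta_S(H)$; applied to $H=C_q(n,\delta n)$ this gives
$$\alpha(C_q(n,\delta n))\le\theta_S(C_q(n,\delta n)).$$
Next I would invoke Proposition~\ref{prop:simple} with $G=C_q$, $|G|=q$, and $d=\delta n$, together with the identity $\theta_z(C_q)=\theta_L(C_q)$ noted just above the corollary, to obtain
$$\theta_S(C_q(n,\delta n))\le\theta_z(C_q)^n\,\theta_S(K_q(n,\delta n))=\theta_L(C_q)^n\,\theta_S(K_q(n,\delta n)).$$

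The remaining task is to control the exponential growth of $\theta_S(K_q(n,\delta n))$, where $K_q(n,\delta n)$ is the Hamming clique power whose independent sets are exactly the $q$-ary codes of length $n$ and minimum distance exceeding $\delta n$. Here I would use the monotonicity $\theta_S\le\theta_L$ together with the classical fact that, for a graph built as a union of the first $\delta n$ relations of the Hamming association scheme, the Lov\'asz semidefinite relaxation collapses onto Delsarte's linear program. Any feasible dual solution of that linear program then yields an upper bound, and the standard Krawtchouk-polynomial certificate underlying $R_{LP1}$ gives
$$\frac1n\log\theta_S(K_q(n,\delta n))\le\frac1n\log\theta_L(K_q(n,\delta n))\le R_{LP1}(q,\delta)+o(1).$$
Combining the three displays, taking $\tfrac1n\log$ and then $\limsup_{n\to\infty}$, produces
$$R^*(C_q,\delta)\le\log\theta_L(C_q)+R_{LP1}(q,\delta),$$
which is the asserted inequality.

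The main obstacle is the last estimate, namely that $\tfrac1n\log\theta_S(K_q(n,\delta n))$ is asymptotically no larger than $R_{LP1}(q,\delta)$. Everything else is substitution and an elementary $\tfrac1n\log$ limit, but this step rests on the (classical yet non-trivial) reduction of the $\theta$-relaxation to Delsarte's linear program for the Hamming scheme, after which the explicit Krawtchouk dual gives precisely the $R_{LP1}$ exponent. I would therefore either cite this reduction or sketch it using the Bose--Mesner algebra of the scheme, and note that the regime $\delta\ge 1-\tfrac1q$, where $R_{LP1}=0$, is covered trivially since the corresponding independence number grows subexponentially.
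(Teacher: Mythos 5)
Your overall route is exactly the paper's: the corollary is stated immediately after the observation $\theta_z(C_q)=\theta_L(C_q)$ precisely because it follows from $\alpha\le\theta_S$, Proposition~\ref{prop:simple} with $G=C_q$ and $d=\delta n$, and the known fact that the Schrijver relaxation of the Hamming distance graph is controlled by Delsarte's linear program, whose dual certificate gives the $R_{LP1}$ exponent. So the decomposition and all the substitutions are as intended.

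There is, however, one misstep in your final estimate. You pass through $\theta_S(K_q(n,\delta n))\le\theta_L(K_q(n,\delta n))$ and then assert that ``the Lov\'asz semidefinite relaxation collapses onto Delsarte's linear program.'' That is not the right attribution: in the notation of~\eqref{eq:thetas_alt}--\eqref{eq:thetal_alt}, the Lov\'asz program forces $D_{v,v'}=0$ on non-edges, and after symmetrization over the automorphism group of the Hamming scheme this becomes the constraint $f(x)=0$ for $d(x,0)>d$, which the MRRW/Krawtchouk certificate does \emph{not} satisfy (it only satisfies $f(x)\le 0$ there). It is Schrijver's $\theta_S$, with the relaxed constraint $D|_{E(\bar G)}\le 0$, that symmetrizes to Delsarte's LP (this is the content of the cited paper~\cite{schrijver-1979}), and there is no guarantee that $\tfrac1n\log\theta_L(K_q(n,\delta n))\le R_{LP1}(q,\delta)+o(1)$. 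The fix is simply to delete the detour: Proposition~\ref{prop:simple} already hands you $\theta_S(K_q(n,\delta n))$, and you should bound that directly by the Delsarte LP value, for which the standard dual polynomial yields $R_{LP1}(q,\delta)$. With that one correction the argument is complete.
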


This shows that $R^*(C_q,\delta) \le \log \theta_L(C_q)$ for $\delta\geq 1-1/q$, improving \eqref{eq:lovasz_cq}.
 While Proposition \ref{prop:oddqweak} is weak for large $\delta$, Corollary \ref{cor:sz+hamming} becomes quickly rather weak as $\delta$ decreases from $(q-1)/q$. 
We next proceed to our main result, which improves Corollary \ref{cor:sz+hamming} and gives a bound uniformly good at all values of $\delta$. 
\begin{theorem}\label{th:main}
\begin{equation}\label{eq:main2}
	R^*(C_q, \delta) \le \log \theta_L(C_q) + R_{LP1}(q', \delta)\,, \qquad q' = 1+{1\over \cos{\pi/q}} \le q\,. 
\end{equation}
\end{theorem}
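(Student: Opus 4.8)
The plan is to run a Delsarte-type linear-programming (dual) argument directly on the group $\mathbb{Z}_q^n$, but with the single-coordinate Fourier structure warped by the optimal Lov\'asz matrix of $C_q$, so that the \emph{effective} alphabet entering the Krawtchouk polynomials becomes $q'$ instead of $q$. For a code $\matc$ of minimum distance at least $d$ and any translation-invariant $F\colon \mathbb{Z}_q^n\to\mathbb{R}$ whose Fourier coefficients $\hat F(s)=q^{-n}\sum_z F(z)\,\omega^{-\langle s,z\rangle}$ (with $\omega=e^{2\pi i/q}$) are all nonnegative and which satisfies $F(z)\le 0$ whenever $d(z,0)\ge d$, expanding $\sum_{x,y\in\matc}F(x-y)$ in characters yields $M_q(n,d)\le q^n F(0)/\sum_z F(z)$. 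The whole task is then to exhibit one good $F$ and estimate this ratio.

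For the construction I would take the Hadamard product $F=D^{\otimes n}\odot B$, where $D=aI+b(S+S^{-1})$ is the optimal circulant solution of $\theta_L(C_q)$ in \eqref{eq:thetal_alt} ($S$ the cyclic shift, $a=\theta_L(C_q)$ and $a=2b\cos(\pi/q)$), and $B(z)=\beta(\wt(z))$ for a univariate polynomial $\beta$ to be chosen, $\wt(z)$ being the number of nonzero coordinates. Because $D$ is supported on the equal and adjacent entries, $D^{\otimes n}$ vanishes on every pair at infinite distance; hence $F$ is supported on $\{-1,0,1\}^n$ and the sign requirement collapses to $\beta(w)\le 0$ for $w\ge d$. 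On its support $F(z)=a^{\,n-w}b^{\,w}\beta(w)$ with $w=\wt(z)$, so that $F(0)=a^n\beta(0)$ and, using $2b/a=1/\cos(\pi/q)=q'-1$,
\[
\sum_z F(z)=a^n\sum_{w=0}^n\binom{n}{w}(q'-1)^w\beta(w).
\]

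The crux is the Fourier positivity $\hat F(s)\ge 0$. A direct computation gives $\hat F(s)=q^{-n}a^n\sum_w \beta(w)\,e_w(d_1,\dots,d_n)$, where $e_w$ is the elementary symmetric polynomial and $d_j=\cos(2\pi s_j/q)/\cos(\pi/q)$ takes values in $[-1,q'-1]$ as $s_j$ varies, attaining both endpoints (these being the largest and, for odd $q$, the smallest eigenvalue ratios of $D$). Now $\sum_w\beta(w)\,e_w(d_1,\dots,d_n)$ is affine in each $d_j$ separately, so over the box $[-1,q'-1]^n$ its minimum is attained at a vertex; at a vertex with $u$ coordinates equal to $-1$ it equals $\sum_w\beta(w)K_w^{(q')}(u)$, i.e.\ the Fourier coefficient, at a weight-$u$ character, of the weight function $\beta(\wt(\cdot))$ on a fictitious $q'$-ary cube. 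Consequently it is enough to pick $\beta$ with nonnegative coefficients in the $q'$-ary Krawtchouk basis: that single requirement forces $\hat F(s)\ge 0$ for every $s$, even though the factor $B$ by itself is \emph{not} positive semidefinite. This is precisely the source of the gain over Corollary~\ref{cor:sz+hamming}, where $B$ had to be feasible for the genuine $q$-ary scheme; the warping by $D$ only asks for feasibility at the endpoint $q'$.

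Taking $\beta$ to be the optimal polynomial of the first linear-programming bound for the $q'$-ary cube then yields $M_q(n,d)\le\theta_L(C_q)^n\cdot q'^n\beta(0)/\sum_w\binom{n}{w}(q'-1)^w\beta(w)$, whose second factor is the $q'$-ary first LP bound of exponential rate $R_{LP1}(q',\delta)$; passing to $\tfrac1n\log$ gives \eqref{eq:main2}. The main obstacle I anticipate is not the algebra above but making rigorous sense of this bound when $q'$ is not an integer: one must define $K_w^{(q')}$ for the real parameter $q'=1+1/\cos(\pi/q)$, verify that the classical first-LP (Levenshtein/MRRW-type) polynomial retains both nonnegative Krawtchouk coefficients and the sign pattern $\beta(w)\le 0$ for $w\ge d$, and confirm that its asymptotic exponent is still $R_{LP1}(q',\delta)$. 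These properties are usually proved only for integer alphabets, so re-establishing them for arbitrary real $q'>2$ — presumably through the three-term recurrence and the root asymptotics of Krawtchouk polynomials, which do extend to real parameters — is the step demanding the most care.
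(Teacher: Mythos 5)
Your proposal is correct and follows the same overall architecture as the paper's proof: the test function is the Hadamard product of the tensor power of the optimal Lov\'asz circulant with a factor depending only on the Hamming weight, the bound factorizes into $\theta_L(C_q)^n$ times a $q'$-ary LP value, and the final step invokes the MRRW polynomial with the non-integer parameter $q'=1+1/\cos(\pi/q)$ (the paper likewise asserts that this construction carries over verbatim to real $q'$, citing Ismail--Simeonov for the root locations, so you have correctly isolated the one technical point that still needs checking). Where you genuinely diverge is in how Fourier positivity of $F=D^{\otimes n}\odot B$ is certified. The paper never estimates $\hat F$ pointwise: it \emph{defines} the second factor $h$ through its Fourier transform, as a nonnegative combination of indicators of the spheres $S_\ell^c$ with coordinates in $\{0,\pm c\}$, $c=(q-1)/2$, so that $\hat f=q^{-n}\hat g*\hat h\ge0$ is immediate; the key observation there is that $\hat g$ vanishes on $S_\ell^c$ for $\ell\ge1$, which simultaneously kills all but the zero-frequency term in $\hat f(0)$ (giving the factorized bound) and produces the Krawtchouk polynomials $K_\ell(\cdot\,;q')$ when $\widehat{1_{S_\ell^c}}$ is evaluated on $S_u^1$. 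You instead compute $\hat F(s)=q^{-n}a^n\sum_w\beta(w)e_w(d_1,\dots,d_n)$ with $d_j=\cos(2\pi s_j/q)/\cos(\pi/q)\in[-1,q'-1]$ and reduce positivity, by multilinearity, to the vertices of the box, where the elementary symmetric polynomials specialize to $K_w(u;q')$; nonnegativity of these vertex values then follows from nonnegative Krawtchouk coefficients of $\beta$ via the reciprocity $\sum_w K_\ell(w;q')K_w(u;q')=q'^{\,n}\delta_{\ell u}$, which is a polynomial identity in $q'$ and hence valid for real parameters. The two feasibility conditions carve out the same class of admissible $\beta$'s, so the resulting bounds coincide; your route has the merit of making transparent exactly why only the two extreme eigenvalue ratios $-1$ and $q'-1$ of the Lov\'asz matrix matter (and where oddness of $q$ enters, since only then is $-1$ the bottom of the range of $d_j$), while the paper's convolution argument avoids any pointwise minimization and exhibits the dual certificate $\hat h\ge 0$ explicitly.
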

\begin{proof}
The bound is based on $\theta$ functions and Delsarte's linear programming bound \cite{delsarte-1973}, but it is easier to describe in terms of Fourier transforms. To that end we
identify $C_q$ with the ring $\mathbb{Z}/q\mathbb{Z}$. 

For any 
$f:C_q^n\to \mathbb{C}$ we define its Fourier transform as
\[
\hat f(\omega) = \sum_{x\in C_q^n} f(x)e^{\frac{2\pi i}{q}<\omega,x>},\quad \omega\in C_q^n
\]
where the non-degenerate $C_q$-valued bilinear form is defined as usual
$$<x,y>\eqdef \sum_{i=1}^n x_i y_i\,. $$ 
We also define the inner product as follows
$$ (f,g) \eqdef q^{-n} \sum_{x\in C_q^n} \bar f(x) g(x)\,. $$

The starting point is a known rephrasing of linear programming bounds. Let $\matc$ be a code with minimum distance at
least $d$. Let $f$ be such that $f(x)\leq 0$ if $d(x,0)\geq d$, $\hat{f}\geq 0$ and $\hat{f}(0)>0$. Then,  consider the Plancherel  identity
\begin{equation}
(f* 1_\matc, 1_\matc)=q^{-n}(\hat{f}\cdot\widehat{1_\matc}, \widehat{1_\matc})\,,
\end{equation}
where $1_A$ is the indicator function of a set $A$. Upper bounding the left hand side by $|\matc|f(0)$ and lower
bounding the right hand side by the zero-frequency term $q^{-n}\hat{f}(0)|\matc|^2$, one gets
\begin{align}\label{eq:f0fhat0}
|\matc| & \leq \min \left\{q^n \frac{f(0)}{\hat{f}(0)}: f(x) \le 0\mbox{~for~} d(x,0)\geq d, \hat f \ge 0, \hat{f}(0)>0 \right\}\,.
\end{align}
The proof of our theorem is based on an assignment $f$ which combines Lov\'asz' choice used to obtain \eqref{eq:lovasz} with the one used in \cite{mceliece-et-al-1977} to obtain \eqref{eq:hamming}.

Observe first that Lov\'asz assignment can be written in one dimension ($n=1$) as
\[
g_1(x) = 1_0(x)+ \varphi  1_{\pm 1} (x),\quad x\in C_q\,,
\]
where  $\varphi=(2\cos(\pi/q))^{-1}$. This gives 
\begin{equation}
\widehat{g_1}(\omega)=1+2\varphi \cos(2 \pi \omega/q), \quad \omega\in C_q.
\end{equation}
Correspondingly, define the $n$-dimensional assignment 
\[
g(x) = \prod_{j=1}^n g_1(x_j), \quad 
\hat g(\omega) = \prod_{j=1}^n\widehat{g_1}(\omega_j), \quad x,\omega \in C_q^n.
\]
Note that $\widehat{g_1}\geq 0$ and, additionally, $\widehat{g_1}(\omega)=0$ for $\omega=\pm c$, with $c=(q-1)/2$. So, $\hat{g}\geq 0$, with $g(\omega)=0$ if $\omega$ contains any $\pm c$ entry.
Since $g(x)=0$ for $x\notin \{0,\pm 1\}^n$, $g$ satisfies all the properties required for $f$ in the case  $d=\infty$, and when used in \eqref{eq:f0fhat0} it gives Lov\'asz' bound
\begin{align}
|\matc| & \leq q^n\frac{g(0)}{\hat{g}(0)}\\
& = q^n \left(\frac{\cos(\pi/q)}{1+\cos(\pi/q)}\right)^n
\end{align}
for codes of infinite minimum distance.

For the case of finite $d\leq n$, we build a function $f$ of the form $f(x)=g(x)h(x)$, for an appropriate $h(x)$. In
particular, since $g(x)$ is non-negative and already takes care of setting $f(x)$ to zero if $x\notin \{0,\pm1\}^n$, it suffices to choose $h$ such that $h(x)\leq 0 $ whenever $x\in\{0,\pm1\}^n$ contains at least $d$ entries with value $\pm1$. We restrict attention to $h$ such that $\hat{h}\geq 0$, so that $\hat{f}=q^{-n}\hat{g}*\hat{h}\geq 0$. In particular, we consider functions $h$ whose Fourier transform is constant on each of the following ``spheres'' in $C_q^n$ 
%composed of $\ell$ values $\pm c$ and $n-\ell$ zero values:
\[
S_\ell^c = \{x:|\{i:x_i=\pm c \}|=\ell,\;  |\{i:x_i=0 \}|=n-\ell\}\,,\quad \ell=0,\ldots, n\,,
\]
and zero outside. This choice is motivated by the fact, observed before, that $\hat g_1(\pm c)=0$. Restricting $\hat{h}$ to be null out of these spheres simplifies the problem considerably.
We thus define
\begin{equation}\label{eq:formofh}
\hat{h}(\omega)=\sum_{\ell=0}^n \hat{h}_\ell 1_{S_{\ell}^c}(\omega)\,,\quad h(x)=q^{-n}\sum_{\ell=0}^n \hat{h}_\ell \widehat {1_{S_{\ell}^c}}(x)\,,
\end{equation}
where $\hat{h}_\ell\geq 0$ and $\hat{h}_0>0$ will be optimized later.
Since $\hat{g}(\omega)=0$, $\omega\in S_\ell\,, \ell>0$, setting $f(x)=g(x)h(x)$ gives
$\hat{f}(0)=q^{-n}(\hat{g}*\hat{h})(0)=q^{-n}\hat{g}(0)\hat{h}_0$. So, the bound \eqref{eq:f0fhat0} becomes 
\begin{equation}
|\matc|\leq \left(q^n\frac{g(0)}{\hat{g}(0)}\right)\left(q^n \frac{h(0)}{\hat{h}_0}\right)\,.
\end{equation}
The first term above is precisely Lov\'asz bound and corresponds the first term in the right hand side of  \eqref{eq:main2}. We now show that the second term corresponds to the linear programming bound of an imaginary ``Hamming scheme'' with a special non-integer alphabet size $q'=1+\cos(\pi/q)^{-1}$. To do this, define analogously to $S_\ell^c$ the spheres
 \[
S_u^1 = \{x:|\{i:x_i=\pm 1 \}|=u,\;  |\{i:x_i=0 \}|=n-u\} \,.
\]
Our constraint is that $h(x)\leq 0$ if $x\in S_u^1$, $u\geq d$. Direct computation shows that for  $x\in S_u^1$,
\begin{align}
\widehat {1_{S_{\ell}^c}}(x) & =\sum_{j=0}^\ell \binom{u}{j}\binom{n-u}{\ell-j}(-1)^j2^\ell (\cos(\pi/q))^j\,,\qquad (x\in S_u^1)\\
& = (2\cos(\pi/q))^\ell  K_\ell(u;q'),\qquad (q'=1+\cos(\pi/q)^{-1}\,),
\end{align}
where $K_\ell(u;q')$ is a Krawtchouck polynomial of degree $\ell$ and parameter $q'$ in the variable $u$.
Setting
\begin{equation}
H(u)=h(x)\,, x\in S_u^1\,,\qquad \hat{H}_\ell=q^{-n} (2\cos(\pi/q))^\ell\cdot  \hat{h}_\ell\,,
\end{equation} 
we have 
\begin{equation}
\label{eq:htoLPq'}
q^n\frac{h(0)}{\hat{h}_0} = \frac{H(0)}{\hat{H}_0}\,,
\end{equation}
where the conditions on $h$ can be restated as
\begin{align}
H(u) & =\sum_{\ell=0}^n \hat{H}_\ell K_\ell(u;q')\,,\quad u=0,\ldots,n\,,\\
\hat{H}_\ell & \geq 0 \,,\quad \ell \geq 0\,,\label{eq:hpos}\\
H(u) & \leq 0\,,\quad u\geq d\,.
\end{align}
So, the minimization of \eqref{eq:htoLPq'} is reduced to the standard linear programming problem for the Hamming space, though with a non-integer parameter $q'$. Since the construction of the polynomial used in \cite{mceliece-et-al-1977} and \cite{aaltonen1990new} can be applied verbatim for non-integer values of $q'$(see also \cite{ismail-simeonov-1998} for the position of the roots of $K_\ell(u;q')$), the claimed bound follows.
\end{proof}

\subsection{On improving bounds on the ``Plotkin'' point.}
We note that the bound~\eqref{eq:main2} matches Lovasz's~\eqref{eq:lovasz_cq} when $\delta > 1-1/q'$. Here
we show that this
threshold cannot be reduced with assignments of the type $f(x)=g(x) h(x)$ unless one chooses $h(x)$ to be a 
high degree polynomial. 
Indeed, going back to the proof of
Theorem~\ref{th:main} we notice that (from symmetry) without loss of generality we can seek $h$ among assignments of the
type
\begin{equation}\label{eq:rtt1}
	h(x) = \sum_{u=0}^n H(u) 1_{S_u^1}(x)\,,\quad \forall x\in \{0,\pm1\}^n\,,
\end{equation}
and the values at $x\not\in \{0,\pm1\}^n$ are immaterial. Here $H(u)$ is a degree $\le n$ polynomial satisfying
\begin{equation}\label{eq:rtt2a}
	H(u) \le 0 \quad \forall u\in [\delta n,n]\cap \mathbb{Z} 
\end{equation}
and a set of linear constraints, encoded in
\begin{equation}\label{eq:rtt0}
	\hat g*\hat h \ge 0\,, \quad \hat g*\hat h(0) > 0\,.
\end{equation}
For any such $H$ we get the bound (after simple manipulations)
\begin{equation}\label{eq:rtt1a}
	|\matc| \le \theta_L(C_q)^n \cdot {H(0)\over \EE[H(U)]}, \qquad U \sim \mathrm{Bino}\left(n, 1-{1\over q'}\right)\,.
\end{equation}
Next, let us (as is done customarily) impose a stronger constraint on $H$ namely that 
\begin{equation}\label{eq:rtt2}
	H(u) \le 0 \quad \forall u\in[\delta n,n]\,.
\end{equation}
We want to show that every polynomial $H$ of degree $\le 2r$ satisfying~\eqref{eq:rtt2} for $\delta < {1-{1\over q'}}$
must have $\EE[H(U)] \le 0$ for $n\gg 1$ and thus will violate~\eqref{eq:rtt0}. This is a standard result in orthogonal
polynomials, see~\cite[Lemma 2]{Sid80} for its appearance in regards to linear-programming bounds. Indeed, it is known
since Gauss that there exists a discrete measure $P_V$ with $r$ atoms located in the
roots of $K_r(\cdot; q')$ such that for any polynomial of degree $\le 2r$ we have
$$ \EE[H(U)] = \EE[H(V)]\,.$$
Since all roots of $K_r$ converge to $n\left(1-{1\over q'}\right) + o(n)$ as $n\to\infty$ we deduce from~\eqref{eq:rtt2}
that, if $\delta < 1-{1\over q'}$, $\EE[H(U)]\le 0$. Similarly, it can be shown that the MRRW choice used in
Theorem~\ref{th:main} is optimal in the sense of minimizing the degree of $H$ for a given distance, cf.~\cite{Sid80}.%

As a closing remark, notice that a choice of $H$ in Theorem~\ref{th:main} in
addition to~\eqref{eq:rtt0},~\eqref{eq:rtt2} was further restricted to positive sums of Krawtchouk polynomials,
cf.~\eqref{eq:hpos}, which is stronger than~\eqref{eq:rtt0}. However,~\cite[Theorem 3]{Sid80} shows that this extra 
restriction does not affect the bounds, unless one goes beyond the minimal-degree (MRRW) choice.
Further improvements may be more easily obtained by addressing constraint~\eqref{eq:rtt2a} 
via Elias-method (as is
done in the second linear programming bounds, cf.~\cite{aaltonen1990new}).

\subsection{ Achievability results}
\label{sec:achievability}
In this section we prove some lower bounds on $R^*(C_q,\delta)$ and in particular the left-hand side inequality~\eqref{eq:main_penta}. To that end we consider the following general
method of producing new codes from old. We note that cycles $C_q$ are in fact Cayley graphs on an abelian group, and so
the next definitions are rather natural.

Let $\Gamma$ be an abelian group with weight function $w:\Gamma \to \mreals_+\cup\{+\infty\}$ s.t. $w(x)=0 \iff x=0$. 
We define the
translation-invariant semi-metric $d(x,y)=w(x-y)$. ($d(\cdot,\cdot)$ is a metric when $w(-x)=w(x)$ and $w(a+b)\le
w(a)+w(b)$, which is the case for the Hamming weight on $\FF_q^n$, for example.) We will say that $(\Gamma_1,w_1)$ is
isomorphic to $(\Gamma_2,w_2)$ if there exists a surjective group isometry  $\Gamma_1\to\Gamma_2$.

Given a $\matc$ that is a subgroup of $\Gamma$ we form a \textit{factor group
$\Gamma'=\Gamma/\matc$} as a group of cosets of $\matc$ and define a \textit{factor-weight} on the space of cosets as
$$ w'(\gamma'+\matc) \eqdef \min_{\gamma\in\gamma'+\matc} w(\gamma)\,.$$
It is clear now that if $\matc'$ is a code in $(\Gamma', w')$ then the union of cosets $\matc'+\matc$ is a code of
minimum distance (in $(\Gamma, w)$) equal to
\begin{equation}
\label{eq:minofdmin}
d_{\min}(\matc' + \matc) = \min\{d_{\min}(\matc), d_{\min}(\matc')\}\,.
\end{equation} 

We now need the following result for the particular case of the pentagon.
\begin{proposition}
\label{prop:C'inF5}
Consider $\Gamma = \FF_5^{n}$ with weight $w(x)=\infty$ if $x_j=\pm 2 \mod 5$ for any $j\in[n]$ and
otherwise $w(x)$ is the Hamming weight of $x\in\FF_5^n$. Let $n=2k$ and take $\matc$ to be the $\FF_5$-row span of the
matrix $[1,2] \otimes I_k$. The code $\matc$ has rate ${1\over 2}\log 5$ and minimum distance $\infty$. The factor-space
$(\Gamma', w')$ is isomorophic  to the Hamming space $\FF_5^k$ with Hamming weight.
\end{proposition}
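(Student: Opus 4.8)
The plan is to treat the four assertions---rate, minimum distance, the group isomorphism, and the isometry---separately, since the first three are routine and essentially all the content sits in the last. First I would record that $\matc$ consists exactly of the pairs $(a,2a)$ with $a\in\FF_5^k$ (this is what the generator $[1,2]\otimes I_k = [I_k \mid 2I_k]$ produces), so $|\matc|=5^k$ and the rate is $\frac{1}{2k}\log 5^k=\frac12\log 5$. For the minimum distance, every nonzero codeword is a difference $(c,2c)$ with $c\neq0$; fixing a coordinate $i$ with $c_i\neq0$ and running through the four nonzero values of $c_i$ in $\FF_5$, one checks case by case that at least one of $c_i,\,2c_i$ equals $\pm2\bmod5$. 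Hence $w((c,2c))=\infty$ for every nonzero codeword, giving $d_{\min}(\matc)=\infty$.

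Next I would fix the natural identification of cosets. Subtracting $(x,2x)\in\matc$ from an arbitrary $(x,y)$ shows that each coset contains the unique representative $(0,z)$ with $z=y-2x$, and $(0,z),(0,z')$ lie in the same coset iff $z=z'$. Thus $\phi\colon\Gamma'\to\FF_5^k$ defined by $\phi((x,y)+\matc)=y-2x$ is a well-defined group isomorphism, and the coset indexed by $z$ is $\{(a,2a+z):a\in\FF_5^k\}$.

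The heart of the proof is to show $\phi$ is an isometry, i.e.\ $w'(\text{coset of }z)=\wt_H(z)$, where $\wt_H$ is the Hamming weight. Writing $w'(\text{coset of }z)=\min_{a\in\FF_5^k} w((a,2a+z))$, I would use that both the Hamming count and the $\pm2\mapsto\infty$ penalty are defined coordinatewise, so the minimization decouples into $k$ independent one-coordinate problems, one for each pair $(a_i,\,2a_i+z_i)$. In each problem the local cost is $\infty$ when $a_i$ or $2a_i+z_i$ is $\pm2$, and otherwise the number of nonzero entries among the two. I would then carry out the one-line case analysis over $z_i\in\{0,1,2,3,4\}$: for $z_i=0$ the choice $a_i=0$ attains cost $0$; for each nonzero $z_i$ no choice attains cost $0$ (that would force $z_i=0$), while an explicit $a_i$ attains cost $1$, namely $a_i=0$ for $z_i=\pm1$ and $a_i=\mp1$ for $z_i=\pm2$. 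Summing the per-coordinate minima yields $w'(\text{coset of }z)=\sum_i[z_i\neq0]=\wt_H(z)$, so $\phi$ is a surjective group isometry onto $(\FF_5^k,\wt_H)$.

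The only place requiring care---and the one I expect to be the main obstacle---is justifying that the global minimization over $a\in\FF_5^k$ genuinely separates into independent coordinatewise minimizations. This follows once one observes that a vector has finite weight iff each of its coordinates avoids $\pm2$, and that in that case the weight is the sum of per-coordinate indicators depending only on $(a_i,z_i)$; hence feasibility and the objective both factor and the minima add. The specific arithmetic of $\FF_5$ together with the multiplier $2$---which encodes that the admissible pentagon differences are exactly $\{0,\pm1\}$---is precisely what forces every nonzero $z_i$ to cost one unit, and this small case check is the crux of the argument.
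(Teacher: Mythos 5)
Your proof is correct and takes essentially the same approach as the paper's: pass to the coset representatives $(a,2a+z)$, observe that the minimization defining $w'$ decouples coordinatewise, and settle each coordinate by a short case check in $\FF_5$. (Your explicit minimizers, $a_i=0$ for $z_i=\pm1$ and $a_i=\mp1$ for $z_i=\pm2$, are stated more carefully than the paper's, whose choice ``$a=-b$'' for $b=\pm2$ is a slip.)
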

\begin{proof} The minimum distance of $\matc$ is verified easily: any non-zero $\FF_5$-multiple of $[1,2]$ has an entry
$\pm2 \mod 5$. It is a standard fact that $\matc$ is the code achieving the zero-error capacity of a pentagon.

To show the isomorphism, first consider the case $n=2,k=1$. Every vector in $\FF_5^2$ can be written in coordinates
$$ x = a [1,2] + b[0, 1]\,, \quad a,b\in\FF_5\,.$$
Then for a factor weight of such $x$ we have 
$$ w'(x+\matc) \eqdef \min_{a_1\in\FF_5} w(a_1[1,2] + b[0,1])\,.$$
When $b=\pm 1$ we take $a_1=0$ and when $b=\pm 2$ we take $a=-b$. In this way, we get
$$ w'(x+\matc) = \begin{cases} 0, x\in \matc\\
				1, x\not \in \matc\,.\end{cases} $$
Since there are $5$ cosets, $(\gamma', w')$ is isomorphic to a one-dimensional Hamming space $\FF_5$. The general case
of $k>1$ follows by vectorizing this argument.
\end{proof}

Equation \eqref{eq:minofdmin} combined with Proposition \ref{prop:C'inF5} implies the following theorem, from which we get the bound~\eqref{eq:main_penta} by application of the Gilbert-Varshamov bound for $\FF_5^k$,
see~\eqref{eq:hamming}, to construct the code $\matc'$ in the factor-space.
\begin{theorem}
\label{th:achiev_pentag}
We have
\begin{equation}
R^*(C_5,\delta)\geq \frac{1}{2}\log 5 +\frac{1}{2}R^*(K_5,2\delta)\,.
\end{equation}
\end{theorem}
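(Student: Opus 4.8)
The plan is to combine Equation~\eqref{eq:minofdmin}, which describes how the minimum distance behaves under the factor-group construction, with the structural isomorphism established in Proposition~\ref{prop:C'inF5}. First I would fix the parameters: set $n=2k$ and take $\matc$ to be the code of Proposition~\ref{prop:C'inF5}, which has rate ${1\over 2}\log 5$, infinite minimum distance, and whose factor-space $(\Gamma',w')$ is isomorphic to the ordinary Hamming space $\FF_5^k$. The key observation is that a good code in this factor-space lifts to a good code in $\Gamma=\FF_5^{2k}$ via the union of cosets $\matc'+\matc$, and by~\eqref{eq:minofdmin} its minimum distance is $\min\{d_{\min}(\matc),d_{\min}(\matc')\}=d_{\min}(\matc')$, since $d_{\min}(\matc)=\infty$.

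The main quantitative step is a careful distance bookkeeping. A target minimum distance of $\delta n = 2\delta k$ in $(\Gamma,w)$ corresponds, through the isomorphism to $\FF_5^k$ with Hamming weight, to requiring minimum Hamming distance $2\delta k$ in a length-$k$ code, i.e. a \emph{relative} distance of $2\delta$ in the $k$-dimensional Hamming space. Thus I would choose $\matc'$ to be a code in $\FF_5^k$ achieving $M_5(k,2\delta k)=\alpha(K_5(k,2\delta k-1))$, so that the lifted code $\matc'+\matc$ has roughly $5^k\cdot|\matc'|$ codewords of length $2k$ and minimum distance $\ge 2\delta k$ in the semimetric $d$. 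Taking logarithms, dividing by $n=2k$, and passing to the limit $k\to\infty$ gives
\begin{equation}
R^*(C_5,\delta)\ge \lim_{k\to\infty}{1\over 2k}\log\!\big(5^k\,M_5(k,2\delta k)\big)={1\over2}\log 5+{1\over2}R^*(K_5,2\delta)\,,
\end{equation}
where the factor ${1\over2}$ on the Hamming-space rate arises precisely because the codelength $k$ is half of $n$.

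I expect the main obstacle to be entirely bookkeeping rather than conceptual: one must verify that the relative distance rescaling ($\delta$ in $C_5^n$ versus $2\delta$ in $K_5^k$) is applied consistently, since a block length $n=2k$ with absolute distance $\delta n$ translates to absolute distance $\delta n = 2\delta k$ over a length-$k$ code, hence fractional distance $2\delta$; this is where the factor of two in $R^*(K_5,2\delta)$ comes from, and it is easy to misplace. A secondary subtlety is confirming that the isometry of Proposition~\ref{prop:C'inF5} is distance-preserving not just pointwise but additively across all $k$ coordinates, so that the factor-weight $w'$ really coincides with Hamming weight on $\FF_5^k$; once the ``vectorizing'' remark in that proposition is granted, this is immediate. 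The final bound~\eqref{eq:main_penta} then follows by substituting the Gilbert--Varshamov lower bound $R_{GV}(5,2\delta)\le R^*(K_5,2\delta)$ from~\eqref{eq:hamming} into the statement.
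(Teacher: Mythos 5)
Your proposal is correct and follows essentially the same route as the paper: the paper's proof is exactly the combination of Proposition~\ref{prop:C'inF5} with~\eqref{eq:minofdmin}, lifting a Hamming-space code $\matc'\subseteq\FF_5^k$ to $\matc'+\matc\subseteq C_5^{2k}$, with the factor of two in the distance arising from the halved block length. Your distance bookkeeping and the final substitution of the Gilbert--Varshamov bound match the paper's (much terser) argument.
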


By a similar procedure, we can derive achievability bounds for other cycles. For
example, when $q=2^r+1$, by using a maximal independent set in $C_q^r$ presented in \cite[Th. 3]{baumert-et-al-1971}, we get the following estimation which generalizes Theorem \ref{th:achiev_pentag}.
\begin{proposition}
\label{prop:achiev_2^r+1} 
For $q=2^r+1$, we have
\begin{equation}
R^*(C_q,\delta)\geq \frac{r-1}{r}\log q +\frac{1}{r}R^*(K_q,r\delta)\,.
\label{eq:LBgeneralr}
\end{equation}
\end{proposition}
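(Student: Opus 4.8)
The plan is to mimic the structure of Theorem~\ref{th:achiev_pentag} exactly, replacing the role of the pentagon's optimal independent set by the maximal independent set in $C_q^r$ of size $(q-1)/2 \cdot 2^{r-1}$ that Baumert et al.\ construct for $q=2^r+1$. First I would set up the factor-group machinery developed in Section~\ref{sec:achievability}: take $\Gamma = \FF_q^n$ with the weight $w$ that equals the Hamming weight when every coordinate lies in $\{0,\pm 1\}\mod q$ and equals $+\infty$ otherwise (so that $d(x,y)=w(x-y)$ reproduces the cycle semimetric on $C_q^n$). I will work with block length $n = rk$, grouping the coordinates into $k$ blocks of $r$ symbols each, and let $\matc$ be the subgroup of $\FF_q^n$ obtained by placing, in each of the $k$ blocks, the linear code whose codewords correspond to the maximal independent set of $C_q^r$.

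The crux is an analogue of Proposition~\ref{prop:C'inF5}: I must exhibit a subgroup $\matc \subseteq \FF_q^r$ that (i) has minimum distance $\infty$ in the $w$-metric on a single block, i.e.\ forms an independent set of size $q^{r-1}$ in $C_q^r$, and (ii) whose factor-space $(\FF_q^r/\matc,\, w')$ is isometrically isomorphic to the one-dimensional Hamming space $\FF_q$ with Hamming weight. The size condition gives $\matc$ rate $\tfrac{r-1}{r}\log q$ per symbol, matching the first term in~\eqref{eq:LBgeneralr}; the factor-space having $q$ cosets is what produces a single $q$-ary Hamming coordinate per block, and over $k$ blocks this yields a factor-space isomorphic to $\FF_q^k$. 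Then, exactly as in the pentagon case, equation~\eqref{eq:minofdmin} says that for any Hamming-metric code $\matc'$ of minimum distance $d'$ in $\FF_q^k$, the union of cosets $\matc'+\matc$ is a code in $(\Gamma,w)$ of minimum distance $\min\{\infty, d'\} = d'$, hence a legitimate $\delta$-code on $C_q^n$. Tracking the relation between the fractional distance $\delta$ on length $n=rk$ and the fractional distance on the length-$k$ Hamming code gives the distance rescaling $2\delta \mapsto r\delta$, since each Hamming error in $\FF_q^k$ contributes, through the factor-weight, disturbances spread over an $r$-symbol block.

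The hard part will be verifying condition (ii): that the factor-weight $w'$ on the $q$ cosets of $\matc$ is genuinely the Hamming weight on $\FF_q$, i.e.\ that every nonzero coset contains a representative of $w$-weight exactly $1$ (a single $\pm1$ entry with all other entries $0$). In the pentagon case ($r=2$) this was checked by hand using the explicit generator $[1,2]$; for general $r$ I would need the structural description of the Baumert--Cvetkovi\'c--et~al.\ independent set to confirm that the coset leaders are precisely the $q$ weight-$\le 1$ vectors $\{0,\pm1,\ldots\}$ supported on a single fixed coordinate, and that no two distinct such vectors fall in the same coset. Concretely this amounts to showing that the $q$ sequences $0, \pm e_1, \pm 2e_1,\ldots$ (one per nonzero residue, placed in a distinguished position) form a complete set of coset representatives of $\matc$ in $\FF_q^r$ and each has the minimal weight in its coset. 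Once this isometry is established, the remainder is bookkeeping: apply the Gilbert--Varshamov bound~\eqref{eq:hamming} to build $\matc'$ in $\FF_q^k$, combine via~\eqref{eq:minofdmin}, take logarithms and normalize by $n=rk$, and let $n\to\infty$ to obtain the rate $\tfrac{r-1}{r}\log q + \tfrac1r R^*(K_q, r\delta)$.

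I would present the argument as a short proof immediately following the proposition statement, citing Proposition~\ref{prop:C'inF5} and Theorem~\ref{th:achiev_pentag} as the $r=2$ prototype and \cite{baumert-et-al-1971} for the independent-set construction, so that only the generalization of the factor-space isomorphism needs spelling out.
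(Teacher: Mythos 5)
Your overall strategy---the factor-group machinery of Section~\ref{sec:achievability} applied to the Baumert et al.\ independent set in $C_q^r$, block length $n=rk$, a Gilbert--Varshamov code in the $k$-dimensional factor space, and the distance rescaling $\delta n = (r\delta)k$---is the same as the paper's. But the step you single out as ``the crux,'' namely that the factor space $(C_q^r/\matc,\,w')$ is \emph{isometrically} isomorphic to the one-dimensional Hamming space on $q$ symbols, is false for $r>2$, and your proof as planned would stall exactly there. For $r=3$ (so $q=9$) the paper computes the factor weight explicitly and finds $w'(3)=w'(6)=2$: the cosets represented by $3$ and $6$ contain no element of $w$-weight $1$, so the coset leaders are not the weight-$\le 1$ vectors you propose to exhibit. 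This failure is not incidental --- the paper's subsequent proposition for the $9$-cycle exploits precisely this non-isometry to get a \emph{better} bound than \eqref{eq:LBgeneralr}.

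The repair is to notice that you never needed an isometry, only the one-sided inequality that the factor weight of the coset of $(0,\dots,0,x')$ is at least the Hamming weight of $x'\in C_q^k$; that alone, via \eqref{eq:minofdmin}, turns a Hamming-distance-$d$ code in the factor into a distance-$d$ code in $(\Gamma,w)$. This inequality does hold, and follows from the structure of the generator matrix whose left block is $I_{(r-1)k}$ and whose right block stacks $2I_k,4I_k,\dots,2^{r-1}I_k$: if a codeword of $\matc$ cancels $t$ of the nonzero entries of $(0,\dots,0,x')$ in the last block of $k$ coordinates, it must itself have at least $t$ nonzero entries among the first $(r-1)k$ coordinates, so the total weight of any coset representative never drops below the Hamming weight of $x'$. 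Two smaller points: since $q=2^r+1$ need not be prime (e.g.\ $q=9$), you should work over the ring $\mathbb{Z}/q\mathbb{Z}$ rather than a field, taking the row-``span'' with coefficients mod $q$ (the inequality above uses no invertibility, so it survives); and with only the inequality in hand you obtain $R^*(K_q,r\delta)$ as a valid but generally non-tight lower bound on the factor-space rate, which is all the proposition claims.
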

\begin{proof} For any $n$, consider the group $\matc_q^n$ with weight $w(x)=\infty$ if $x\notin\{0,\pm 1 \}^n$ and $w(x)$ is the Hamming weight of $x$ if $x\in \{0,\pm 1\}^n$. We build a code of length $n=kr$ and show that, as $k\to \infty$, we can achieve \eqref{eq:LBgeneralr}. 
The construction is similar to the one used in Theorem \ref{th:achiev_pentag} for the particular case $r=2$, with the difference that we now only use the Hamming weight on the factor group as a lower bound for the factor-weight, since the factor groups is not isometric to a Hamming space for $r>2$.

As shown in \cite[Th. 3]{baumert-et-al-1971}, the following set
\begin{equation}
\matc_1 = \{(a_1,\ldots,a_r): a_i\in C_q, a_r=2a_1+4a_2+\cdots+2^{r-1}a_{r-1} \}
\end{equation}
is a maximal independent set of $C_q^r$. Note that by maximality the factor weight on $C_q^r/\matc_1$ is finite. Hence, we can consider an infinite distance code $\matc\subset C_q^{rk}$ with rate $(1-1/r) \log q$ as the row-``span''\footnote{Here the ``span'' is intended with coefficients in $C_q$, where multiplication is mod $q$, but $q$ need not be a prime.} of the matrix
\begin{equation}
G=\left [I_{(r-1)k},\, 
\begin{array}{c}
2 I_k\\
4I_k\\
\vdots\\
2^{r-1} I_k
\end{array}
\right]\,,
\end{equation}
where $I_m$ is a again he $m\times m$ identity matrix. The factor group $\Gamma'$ is homomorphic to $C_q^k$ and admits the representation
\begin{equation}
[x']=(0,0,\ldots,0, x') + \matc\,,\quad x'\in C_q^k\,,
\label{eq:factorelements}
\end{equation}
where there are $(r-1)k$ concatenated zeros. We see that, using this representation, any $x'\in C_q^k$ with Hamming weight $d$ gives a class $[x']$ with factor weight at least $d$. In fact, in the sum in \eqref{eq:factorelements}, due to the structure of $G$ above if a codeword in $\matc$ cancels say $t$ non-zero elements in $x'$, then it contains at least $t$ non zero components in the first $(r-1)k$ positions. Hence, any $q$-ary code of length $k$ and minimum Hamming distance $d$ gives, under the representation in \eqref{eq:factorelements}, a code $\matc'\subset \Gamma'$ with factor weight at least $d$. Setting $d=\lceil r \delta \rceil$ and using the Gilbert-Varshamov bound for codes in $C_q^k$ we get the statement of the theorem.
\end{proof}

Note that Proposition \ref{prop:achiev_2^r+1} is not uniformly better than Proposition \ref{prop:oddqweak} with the Gilbert-Varshamov bound (see Figure \ref{fig:ngon} below). However, the bound can be improved for specific $r$ using appropriate bounds for the minimum distance of codes in the factor group with the exact factor-weight. For example, in the case $r=3$ (the $9$-cycle), we have the following improved bound, also plotted in Figure \ref{fig:ngon} below.
\begin{proposition}
For $q=9$ (that is, $r=3$), we have
\begin{equation}
R^*(C_q,\delta)\geq \frac{r-1}{r}\log q +\frac{1}{r}R'(r\delta)\,.
\label{eq:LBreq3}
\end{equation}
with $R'(\delta)=0$ if $\delta>10/9$ and otherwise $R'(\delta)=\log(9)-H(P)$, where $P$ is the distribution defined by
\begin{equation}
\quad P=\frac{(1,t,t,t^2,t,t,t^2,t,t)}{1+6t+2t^2}\,,
\end{equation}
$t$ being the positive solution of the equation
\begin{equation}
2t^2(2-\delta)+6t(1-\delta)-\delta=0\,.
\end{equation}
\end{proposition}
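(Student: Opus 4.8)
The plan is to specialize the coset construction behind Proposition~\ref{prop:achiev_2^r+1} to $r=3$, $q=9$, but to replace the Hamming lower bound on the factor-weight with the \emph{exact} factor-weight $w'$, which I then feed into a Gilbert--Varshamov argument. As there, I take $\matc_1=\{(a_1,a_2,a_3):a_3=2a_1+4a_2\}$ to be the Baumert et al.\ maximal independent set of $C_9^3$, build the infinite-distance code $\matc\subset C_9^{3k}$ as the row span of the corresponding matrix $G$, and use the factor group $\Gamma'\cong C_9^k$ under the representation~\eqref{eq:factorelements}. By~\eqref{eq:minofdmin} the union $\matc'+\matc$ has minimum distance $\min\{d_{\min}(\matc),d_{\min}(\matc')\}=d_{\min}(\matc')$ measured in $w'$, and its rate is $\tfrac23\log 9+\tfrac13\cdot(\text{rate of }\matc')$; so it suffices to maximize the rate of a factor-space code $\matc'\subset C_9^k$ whose minimum $w'$-distance is at least $3\delta k$.

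The first step is to evaluate the single-coordinate factor-weight $w'([x'])=\min_{a_1,a_2\in C_9}w(a_1,a_2,x'+2a_1+4a_2)$ on $\Gamma'\cong C_9$. Since every term with an entry outside $\{0,\pm1\}$ has infinite weight, this is a finite enumeration over $a_1,a_2\in\{0,\pm1\}$, and it yields the profile $(w'([0]),\dots,w'([8]))=(0,1,1,2,1,1,2,1,1)$. In particular $w'$ dominates the Hamming weight $w_{\mathrm H}$ pointwise, so the $w'$-ball of any radius sits inside the Hamming ball of the same radius; this containment is exactly the source of the improvement over Proposition~\ref{prop:achiev_2^r+1}.

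The second step is the Gilbert--Varshamov bound for the additive weight $w'$ on $C_9^k$: there is a code $\matc'$ of minimum $w'$-distance at least $d=\lceil 3\delta k\rceil$ and size at least $9^k/V_{w'}(d-1)$, where $V_{w'}(\rho k)$ counts the points of $C_9^k$ within factor-weight $\rho k$. By the method of types, $\tfrac1k\log V_{w'}(\rho k)\to\max\{H(P):\mathbb{E}_P[w']\le\rho\}$, and the constrained maximizer is the tilted distribution $P(a)\propto t^{w'(a)}$, with $t$ fixed on the active boundary by $\mathbb{E}_P[w']=\rho$. Writing $\mathbb{E}_P[w']=(6t+4t^2)/(1+6t+2t^2)=\rho$ reproduces the stated quadratic $2t^2(2-\rho)+6t(1-\rho)-\rho=0$ together with $P\propto(1,t,t,t^2,t,t,t^2,t,t)$; the constraint becomes inactive (uniform $P$, so $R'=0$) exactly when $\rho\ge\mathbb{E}_{\mathrm{unif}}[w']=10/9$, which is the threshold. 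Substituting $\rho=r\delta=3\delta$ and assembling the rate as $\tfrac{r-1}{r}\log q+\tfrac1r R'(r\delta)$ gives~\eqref{eq:LBreq3}.

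The only genuinely new calculation is the exact evaluation of $w'$, which is a short finite check; identifying the extremal tilted distribution in the Gilbert--Varshamov exponent is the standard Lagrangian/large-deviations computation. The main points to keep straight are that the factor-space normalized distance is $r\delta=3\delta$ (so that $R'$ is evaluated at $3\delta$) and that the pointwise domination $w'\ge w_{\mathrm H}$ is precisely what guarantees the improvement over Proposition~\ref{prop:achiev_2^r+1}.
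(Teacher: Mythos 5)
Your proposal is correct and follows essentially the same route as the paper: specialize the coset construction of Proposition~\ref{prop:achiev_2^r+1} to $r=3$, compute the exact single-coordinate factor-weight profile $(0,1,1,2,1,1,2,1,1)$, and run the Gilbert--Varshamov argument with the max-entropy (tilted) distribution, whose constraint $\mathbb{E}_P[w']=\rho$ yields the stated quadratic and the $10/9$ threshold. You in fact supply more detail than the paper does (the explicit coset enumeration and the verification of the quadratic), and all the computations check out.
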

\begin{proof}
The proof is the same as for Proposition \ref{prop:achiev_2^r+1} but we use the exact factor-weight in the Gilbert-Varshamov bound for codes in the factor group $C_q^{rk}/\matc$. In particular, for $r=3$ it is not too difficult to see that the factor weight $w'$ in $C_q^r/\matc_1$ satisfies
\begin{equation}
w'(0)=0, w'(1)=w'(2)=w'(4)=w'(5)=w'(7)=w'(8)=1, w'(3)=w'(6)=2.
\end{equation}
Since $C_q^{rk}/\matc$ is homomorphic to $C_q^k$ (consider again representation \eqref{eq:factorelements}), we can build codes in the factor group using the standard Gilbert-Varshamov procedure with the exact factor-weight. Then one deduces (see for example \cite{berlekamp-book-1984} or \cite{piret-1986}) the existence of codes with asymptotic minimum normalized distance $r\delta$ and rate
$R\geq \log(9)-H(P)$, where $P$ is the distribution with maximum entropy satisfying
\begin{equation}
\sum_{x \in C_q} P(x)w'(x)\leq r\delta\,.
\label{eq:lineqdist}
\end{equation}
Using Lagrange multipliers one deduces that the maximizing distribution is uniform if $r\delta\geq 10/9$ while for $r\delta< 10/9$ it has the form
\begin{equation}
P(x)=\frac{e^{-\lambda w'(x)}}{\sum_{x'}e^{-\lambda w'(x)}}\,,
\end{equation}
where $\lambda$ is chosen so that \eqref{eq:lineqdist} is satisfied with equality. Simple algebraic manipulations then give the claimed bound.
\end{proof}

\section{Evaluation and discussion}

Figures \ref{fig:pentagon} and \ref{fig:ngon} compare various bounds obtained for the pentagon and for the
9-gon, respectively (all logs are to the base $e$). We observe that the upper bound derived from Proposition
\ref{prop:oddqweak} with the use of a second linear programming bound  outperforms
Theorem \ref{th:main} at low values of $\delta$ (a similar result is obtained even just using the Elias bound in Proposition
\ref{prop:oddqweak}, see also \cite[Sec. V.D]{dalai-TIT-2015}) . This is not surprising since Theorem \ref{th:main} is based on the so
called first  linear programming bound of \cite{mceliece-et-al-1977}, which is known to be rather weak at high rates. 
It is an open question whether our technique (in particular working with non-integer $q$) can be extended to replace
$R_{LP1}$ in~\eqref{eq:main_penta} with $R_{LP2}$ derived by Aaltonen~\cite{aaltonen1990new} for the $q$-ary Hamming scheme.

\begin{figure}
\centering
\ifpdf
	\includegraphics[width=\linewidth]{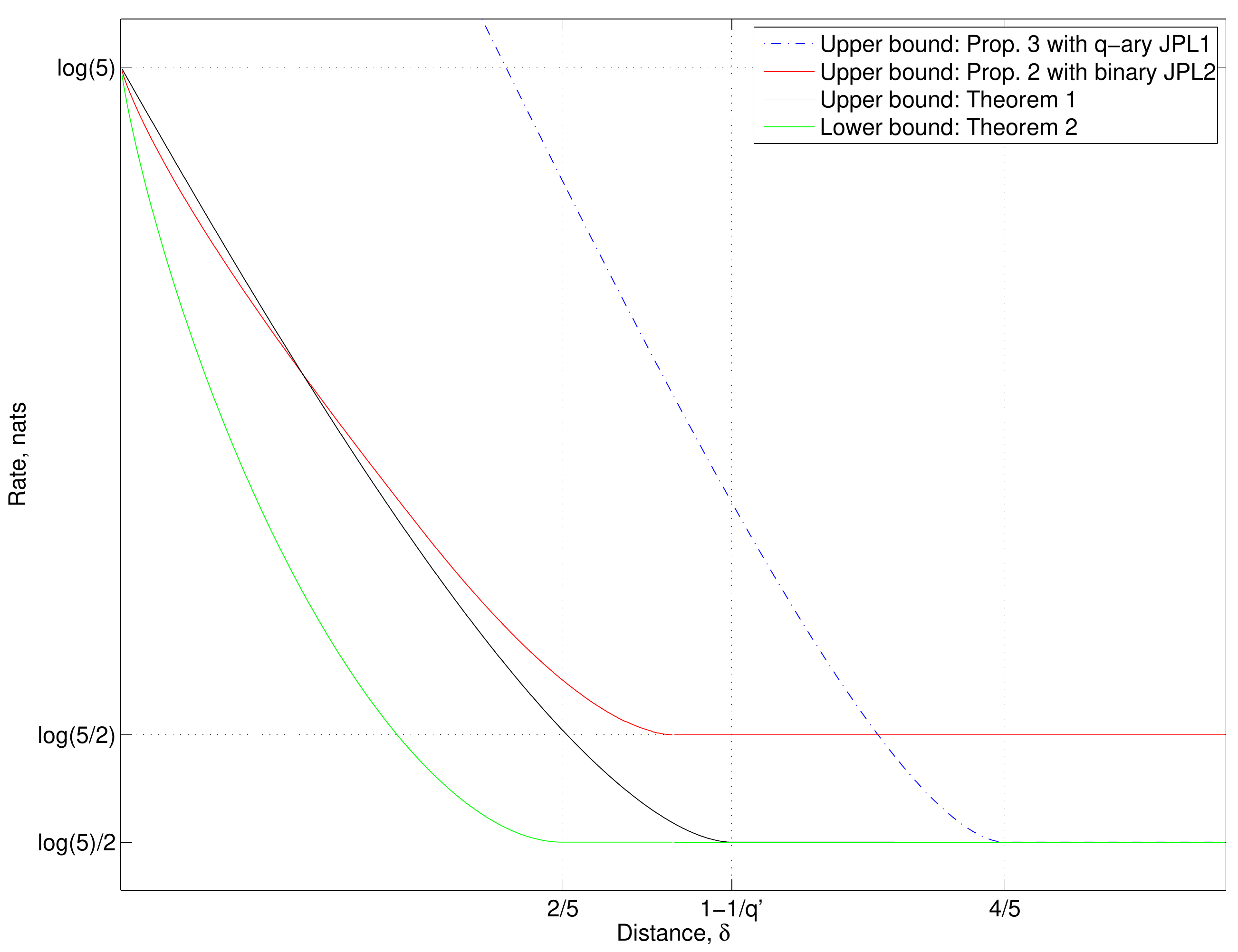}
\else
	\includegraphics[width=\linewidth]{pentagon_yp.eps}
\fi
\caption{Comparison of different bounds for the pentagon.}
\label{fig:pentagon}
\end{figure}

\begin{figure}
\centering
\ifpdf
	\includegraphics[width=\linewidth]{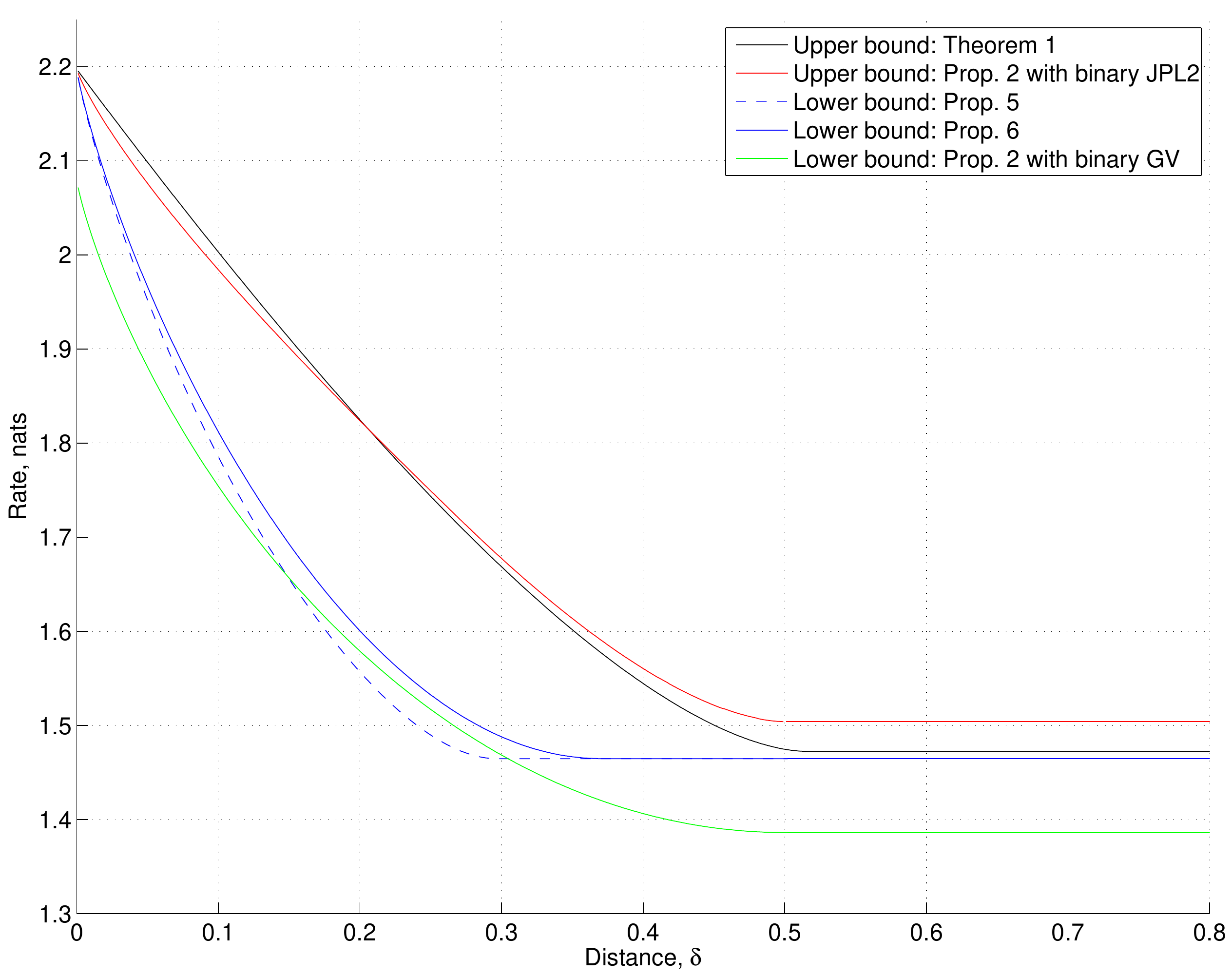}
\else
	\includegraphics[width=\linewidth]{ngon_yp.eps}
\fi
\caption{Comparison of different bounds for the 9-cycle.}
\label{fig:ngon}
\end{figure}

%%%%%%%%%%%%%

%
%% References:
%% We recommend the usage of BibTeX:
%%
%\bibliographystyle{IEEEtran}
%\bibliography{bibeit}

\begin{thebibliography}{10}
\providecommand{\url}[1]{#1}
\csname url@samestyle\endcsname
\providecommand{\newblock}{\relax}
\providecommand{\bibinfo}[2]{#2}
\providecommand{\BIBentrySTDinterwordspacing}{\spaceskip=0pt\relax}
\providecommand{\BIBentryALTinterwordstretchfactor}{4}
\providecommand{\BIBentryALTinterwordspacing}{\spaceskip=\fontdimen2\font plus
\BIBentryALTinterwordstretchfactor\fontdimen3\font minus
  \fontdimen4\font\relax}
\providecommand{\BIBforeignlanguage}[2]{{%
\expandafter\ifx\csname l@#1\endcsname\relax
\typeout{** WARNING: IEEEtran.bst: No hyphenation pattern has been}%
\typeout{** loaded for the language `#1'. Using the pattern for}%
\typeout{** the default language instead.}%
\else
\language=\csname l@#1\endcsname
\fi
#2}}
\providecommand{\BIBdecl}{\relax}
\BIBdecl

\bibitem{dalai-TIT-2015}
M.~Dalai, ``Elias {B}ound for {G}eneral {D}istances and {S}table {S}ets in
  {E}dge-{W}eighted {G}raphs,'' \emph{IEEE Trans. Inform. Theory}, vol.~61,
  no.~5, pp. 2335--2350, May 2015.

\bibitem{shannon-1956}
C.~E. Shannon, ``{T}he {Z}ero-{E}rror {C}apacity of a {N}oisy {C}hannel,''
  \emph{IRE Trans. Inform. Theory}, vol. IT-2, pp. 8--19, 1956.

\bibitem{lovasz-1979}
L.~Lov{\'a}sz, ``{O}n the {S}hannon {C}apacity of a {G}raph,'' \emph{IEEE
  Trans. Inform. Theory}, vol.~25, no.~1, pp. 1--7, 1979.

\bibitem{aaltonen1990new}
M.~Aaltonen, ``A new upper bound on nonbinary block codes,'' \emph{Discrete
  Mathematics}, vol.~83, no.~2, pp. 139--160, 1990.

\bibitem{tsfasman1982modular}
M.~A. Tsfasman, S.~Vl{\u{a}}dut, and T.~Zink, ``Modular curves, {S}himura
  curves, and {G}oppa codes, better than {V}arshamov-{G}ilbert bound,''
  \emph{Mathematische Nachrichten}, vol. 109, no.~1, pp. 21--28, 1982.

\bibitem{schrijver-1979}
A.~Schrijver, ``A comparison of the {D}elsarte and {L}ov{\'a}sz bounds,''
  \emph{IEEE Trans. on Inform. Theory}, vol.~25, no.~4, pp. 425 -- 429, jul
  1979.

\bibitem{szegedy-1994}
M.~Szegedy, ``A note on {$\vartheta$} number of {L}ov\'asz and the generalized
  delsarte bound,'' in \emph{35th Symposium on Foundations of Computer
  Science}, I.~C.~S. Press, Ed., 1994, pp. 36--39.

\bibitem{delsarte-1973}
P.~Delsarte, ``An {A}lgebraic {A}pproach to the {A}ssociation {S}chemes of
  {C}oding {T}heory,'' \emph{Philips Res. Rep.}, vol.~10, 1973.

\bibitem{mceliece-et-al-1977}
R.~McEliece, E.~Rodemich, H.~Rumsey, and L.~Welch, ``New upper bounds on the
  rate of a code via the {D}elsarte-{M}ac{W}illiams inequalities,''
  \emph{Information Theory, IEEE Transactions on}, vol.~23, no.~2, pp. 157 --
  166, mar 1977.

\bibitem{ismail-simeonov-1998}
M.~E.~H. Ismail and P.~Simeonov, ``Strong {A}symptotics for {K}rawtchouk
  {P}olynomials,'' \emph{J. Comput. Appl. Math.}, vol. 100, no.~2, pp.
  121--144, 1998.

\bibitem{Sid80}
V.~Sidelnikov, ``Extremal polynomials used in bounds of code volume,''
  \emph{(in Russian) Probl. Peredachi Inform.}, vol.~16, no.~3, pp. 17--30,
  1980.

\bibitem{baumert-et-al-1971}
L.~D. Baumert, R.~J. McEliece, E.~Rodemich, R.~H.~C. Jr., R.~Stanley, and
  H.~Taylor, ``{A} {C}ombinatorial {P}acking {P}roblem,'' \emph{Proc. SIAM
  AMS}, vol.~4, pp. 97--108, 1971.

\bibitem{berlekamp-book-1984}
E.~Berlekamp, \emph{{A}lgebraic {C}oding {T}heory}, ser. McGraw-Hill series in
  systems science.\hskip 1em plus 0.5em minus 0.4em\relax Aegean Park Press,
  1984.

\bibitem{piret-1986}
P.~Piret, ``Bounds for {C}odes {O}ver the {U}nit {C}ircle,'' \emph{Information
  Theory, IEEE Transactions on}, vol.~32, no.~6, pp. 760--767, 1986.

\end{thebibliography}

\end{document}